\newtheorem{lemma}{\bf Lemma}[section]
\newtheorem{proposition}[lemma]{\bf Proposition}
\newtheorem{definition}[lemma]{\bf Definition}
\newcommand{\Sub}{{\mathrm{Sub}}}
\newcommand{\G}{{\mathfrak{G}}}
\begin{document}

\title{On incomplete lattice homomorphisms in subspaces of geometries: ``Half'' a problem of Hartmanis from 1959}

\author{Jonathan David Farley}
\address{Department of Mathematics, Morgan State University, 1700 E. Cold Spring Lane, Baltimore, MD 21251, United
States of America}
\email{\tt lattice.theory@gmail.com}
\author{Dominic van der Zypen}
\address{Federal Office of Social Insurance, CH-3003 Bern,
Switzerland}
\email{\tt dominic.zypen@gmail.com}

\subjclass[2010]{06B23, 05A18}

\begin{abstract} Turing Award winner Juris Hartmanis introduced in 1959 \cite{Ha}
lattices of subspaces of generalized partitions 
(``partitions of type $n$''; ``geometries'' if $n=2$). Hartmanis states it is  ``an unsolved problem whether there are any incomplete lattice homomorphisms in'' lattices of subspaces of geometries.  (He continues, ``[I]f so how can these geometries be characterized.'') We give a positive answer to this question. 

\end{abstract}

\maketitle
\parindent = 0mm
\parskip = 2 mm
\section{Generalized partitions}
A {\em partition of type $n$} for $n\geq 1$
on a set $S$ (consisting of at least $n$ elements) is a set ${\mathfrak P}\subseteq 
{\mathcal P}(S)$ such that
\begin{enumerate}
\item all members of $\mathfrak P$ have at least $n$ elements, and
\item any $n$ elements of $S$ are contained in exactly one member of $\mathfrak P$.
\end{enumerate}
Partitions of type $1$ are the ``traditional'' partitions.

A partition of type $2$ is referred to as a {\em geometry}, and its 
elements are called {\em lines}.

\begin{definition}
If $\G$ is a geometry on a set $S$, a set $T \subseteq S$ is said to be a 
{\em subspace of $S$ with respect to $\G$} if it is ``closed under lines,'' 
that is, for any distinct $x,y \in T$, for the (unique) element $g\in \G$ 
that satisfies $\{x,y\}\subseteq g$ we have $g\subseteq T$. 

We denote the collection of subspaces of $S$ with respect to 
the geometry $\G$ by $\Sub(S,\G)$.
\end{definition}

If $A\subseteq \Sub(S,\G)$ it is easy to see that $\bigcap A \in 
\Sub(S,\G)$, therefore $\Sub(S,\G)$ is a complete lattice
with respect to set inclusion.

\section{Incomplete lattice homomorphisms}
For the terms used in this section, we refer to \cite{DP}. Let $K, L$
be complete lattices. If $f:K\to L$ is order-preserving and $S\subseteq L$
we have $$f(\bigvee_K S) \geq f(s) \textrm{ for all } s\in S,$$ which implies
$$\bigvee_L f(S) \leq f(\bigvee_K S).$$ A lattice homomorphism $f:K\to L$
is said to be {\em join-incomplete} if there is $S\subseteq K$ non-empty
such that $\bigvee_L f(S) < f(\bigvee_K S)$. (Dually, we define 
{\em meet-incompleteness}.) We say $f$ is {\em incomplete} if it is join-incomplete,
meet-incomplete, or both.

The next lemmas deal with incomplete lattice homomorphisms in the
context of infinite complete distributive lattices.
\begin{lemma} \label{lemma1} Let $L$ be an infinite complete and distributive lattice with
bottom element $0$ and top element $1$. Suppose
$P\subseteq L$ is a non-principal prime ideal (i.e., $\bigvee P \notin P$).
Then there is
a join-incomplete 
lattice homomorphism $f: L\to L$ preserving $0$ and $1$.
\end{lemma}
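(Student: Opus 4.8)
The plan is to exploit the classical correspondence between prime ideals and lattice homomorphisms onto the two-element chain. Define $f\colon L\to L$ by setting $f(x)=0$ if $x\in P$ and $f(x)=1$ if $x\notin P$. Since $L$ has both a bottom and a top, the two-element chain $\{0,1\}$ is a sublattice of $L$, so $f$ is a genuine self-map of $L$ whose image is $\{0,1\}$. Because $P$ is an ideal it contains $0$, and because $P$ is a proper (prime) ideal it omits $1$; hence $f(0)=0$ and $f(1)=1$, so $f$ preserves the bounds.

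The first substantive step is to check that $f$ is a lattice homomorphism. Meet-preservation is where the primeness of $P$ is used: $f(a\wedge b)=0$ exactly when $a\wedge b\in P$, which by primeness happens iff $a\in P$ or $b\in P$, i.e.\ iff $f(a)\wedge f(b)=0$. Join-preservation uses only that $P$ is an ideal: $f(a\vee b)=0$ iff $a\vee b\in P$ iff both $a\in P$ and $b\in P$ (downward closure and closure under finite joins), i.e.\ iff $f(a)\vee f(b)=0$. Since the target is the chain $\{0,1\}$, agreeing on which value is $0$ determines the value, so these equivalences give $f(a\wedge b)=f(a)\wedge f(b)$ and $f(a\vee b)=f(a)\vee f(b)$.

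The decisive step is to produce a witness to join-incompleteness, and the non-principal hypothesis is tailored for exactly this. Take $S=P$. Then $f(S)=\{0\}$, so $\bigvee_L f(S)=0$. On the other hand $\bigvee_L S=\bigvee P$, and non-principality says precisely that $\bigvee P\notin P$, whence $f(\bigvee P)=1$. Therefore
$$\bigvee_L f(S)=0<1=f\Bigl(\bigvee_L S\Bigr),$$
so $f$ is join-incomplete, as required.

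Since the construction is forced once one recognizes the prime ideal $P$ as the zero-set of a $\{0,1\}$-valued homomorphism, I do not expect a genuine obstacle here. The only points requiring care are the verification of meet-preservation, which is exactly the content of $P$ being prime, and the observation that the non-principal condition $\bigvee P\notin P$ is precisely what forces $\bigvee_L f(P)$ and $f(\bigvee_L P)$ apart. I would also note that distributivity of $L$ is not actually needed for this direction—it is the primeness and non-principality of $P$ that do all the work—so the hypothesis is there for uniformity with the companion (meet-incomplete) statement rather than out of necessity.
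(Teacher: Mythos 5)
Your proof is correct and follows essentially the same route as the paper: the characteristic-function-style map that is $0$ on $P$ and $1$ on $L\setminus P$, with non-principality supplying the witness $S=P$ to join-incompleteness. Your added observations (the explicit verification that primeness gives meet-preservation and the ideal axioms give join-preservation, and the remark that distributivity is not needed for this lemma) are accurate but do not change the argument.
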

\begin{proof}
Let $f:L\to L$ be $0$ on $P$ and $1$ on $L\setminus P$. Since $P$ is a prime
ideal, $L\setminus P$ is a filter, which implies that $f$ is a lattice homomorphism.
It is (join-)incomplete, because $\bigvee P \notin P$ implies $\bigvee f(P) = 0 
\neq 1 = f(\bigvee P)$.
\end{proof}
Of course, there is a  dual version of Lemma \ref{lemma1} about filters
instead of ideals. Next, we show that there is always either a non-principal
prime ideal or filter in infinite distributive complete lattices.
\begin{lemma}\label{lemma2} If $L$ is infinite, complete, and distributive,
 then it contains either a non-principal
prime ideal or a non-principal prime filter.
\end{lemma}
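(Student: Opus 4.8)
The plan is to argue by contradiction: I will assume that \emph{every} prime ideal of $L$ and \emph{every} prime filter of $L$ is principal, and deduce that $L$ must then be finite, contradicting the hypothesis. Since a prime ideal $P$ is principal exactly when $\bigvee P \in P$, this assumption is precisely the negation of the conclusion (an ideal being non-principal in the sense of Lemma~\ref{lemma1}). The argument splits into two stages. First I show the assumption forces $L$ to be a chain; then I show that an infinite complete chain always carries a non-principal prime ideal or filter, producing the contradiction. Distributivity will be used only in the first stage.

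For the first stage the key tool is the Birkhoff--Stone prime separation theorem for distributive lattices (see \cite{DP}): whenever $a \not\le b$ there is a prime ideal $P$ with $b \in P$ and $a \notin P$. By assumption $P = \downarrow p$ is principal, and its complement, being a prime filter, is a principal filter $\uparrow q$; hence $L = \downarrow p \sqcup \uparrow q$, so every element of $L$ lies below $p$ or above $q$, and I call such a pair a \emph{split}. Two splits are always comparable (the lower point of one must sit below the lower point of the other), so the splits form a chain. Now if $x,y$ were incomparable, separation applied to $x \not\le y$ and to $y \not\le x$ yields two splits, one with $x$ high and $y$ low, the other with the roles reversed. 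Recording for each element the set of splits on whose low side it falls, one checks this set is upward closed in the chain of splits and determines the element; the two splits just produced would then force these sets to be incomparable, which is impossible for subsets forming a chain. Hence every pair is comparable and $L$ is a chain. Verifying this splitting dichotomy together with the comparability of splits is the step I expect to be the main obstacle.

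For the second stage $L$ is an infinite complete chain, in which every proper nonempty down-set is automatically a prime ideal and every proper nonempty up-set a prime filter; so the assumption says exactly that every such down-set has a greatest element and every such up-set a least element. In particular each non-top element has an immediate successor and each non-bottom element an immediate predecessor. Iterating successors from $0$ gives a strictly increasing sequence $0 = s_0 \prec s_1 \prec s_2 \prec \cdots$. If it reaches $1$ in finitely many steps then $L$ is exhausted by the $s_n$ and is finite; otherwise its supremum $s$ satisfies $s_n < s$ for all $n$, and the down-set $\{\,y : y < s\,\}$ can have no greatest element, contradicting the assumption. Either way $L$ is forced to be finite, the contradiction we sought. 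Thus $L$ has a non-principal prime ideal or a non-principal prime filter, the latter of which feeds into the dual of Lemma~\ref{lemma1}.
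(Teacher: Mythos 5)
Your second stage is fine, but the first stage contains a fatal error, and its intended conclusion is in fact false. The claim that ``two splits are always comparable (the lower point of one must sit below the lower point of the other)'' already fails in the four-element Boolean lattice $L=\{0,a,b,1\}$ with $a,b$ incomparable: the two prime ideals $\downarrow a$ and $\downarrow b$ are principal, their complements are the principal prime filters $\uparrow b$ and $\uparrow a$, and so you get two splits whose lower points $a$ and $b$ are incomparable. The splitting dichotomy only gives you, for splits $(p_1,q_1)$ and $(p_2,q_2)$, that $p_1\le p_2$ or $p_1\ge q_2$, and the second alternative does not make $p_1$ and $p_2$ comparable. Once the splits need not form a chain, the argument via upward-closed subsets of a chain collapses and nothing rules out incomparable elements. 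Worse, your stage one nowhere uses that $L$ is infinite or complete, so if it were correct it would prove that \emph{every} distributive lattice in which all prime ideals and prime filters are principal is a chain --- and the example above (or any finite Boolean lattice) refutes that. Hence the reduction to a chain cannot be repaired without bringing the infiniteness hypothesis into stage one.

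For comparison, the paper's proof runs in the opposite direction and avoids the contradiction setup entirely: it starts from a non-principal \emph{ordinary} ideal $J$ (a non-principal ideal or a non-principal filter exists in any infinite distributive lattice), uses the Prime Ideal (separation) Theorem to find a prime ideal $P\supseteq J$ disjoint from the principal filter $\uparrow\bigvee J$, and then notes that $P$ cannot be principal, since $\bigvee P\ge\bigvee J$ while $\bigvee J\notin P$ and $P$ is a down-set. That is the missing idea: rather than showing the negated conclusion forces $L$ into a special shape, one manufactures a non-principal prime ideal directly from a non-principal ideal.
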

\begin{proof}
Any infinite distributive lattice contains at least a non-principal ideal
or a non-principal filter. We may assume that $J$ is a non-principal ideal,
so that $j^* = \bigvee J \notin J$. Let $G = \{y\in L: y \geq j^*\}$ be the principal
filter generated by $j^*$. As $J\cap G = \emptyset$ we can
use the Prime Ideal Theorem (see \cite{DP},
Theorem 10.18) and get a prime ideal
$P$ such that $J\subseteq P$ and $P\cap G = \emptyset$, which 
implies $j^* \notin P$.

Next we show that $P$ is not principal: if we had  $p^*:=\bigvee P \in P$ then $J\subseteq P$
would imply $p^* \geq j^* = \bigvee J$ and $j^* \in P$ because $P$ is a down-set.
Therefore $P$ is a non-principal prime
ideal.
\end{proof}
\begin{proposition} \label{prop} Let $L$ be an infinite complete and distributive lattice with
bottom element $0$ and top element $1$. Then there is
an incomplete lattice homomorphism 
$f: L\to L$ respecting $0$ and $1$.
\end{proposition}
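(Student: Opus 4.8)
The plan is to reduce the statement directly to the machinery already assembled, since the Proposition is essentially the synthesis of Lemma \ref{lemma1}, its dual, and Lemma \ref{lemma2}. First I would invoke Lemma \ref{lemma2} to obtain the dichotomy it guarantees: the infinite complete distributive lattice $L$ contains either a non-principal prime ideal $P$ or a non-principal prime filter. This is the one genuinely substantive input to the argument, but it is already granted to us, so I would simply cite it and branch on the two outcomes.

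In the first case, where $L$ has a non-principal prime ideal $P$, I would apply Lemma \ref{lemma1} verbatim to produce a join-incomplete lattice homomorphism $f: L \to L$ fixing $0$ and $1$. Since join-incompleteness is, by the definition given in this section, already one of the conditions rendering a homomorphism incomplete, this $f$ is the map we want. In the second case, where $L$ has a non-principal prime filter, I would appeal to the dual of Lemma \ref{lemma1} (the version for filters remarked upon immediately after its proof) to obtain a meet-incomplete lattice homomorphism $f: L \to L$ fixing $0$ and $1$; this too is incomplete by definition.

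The only point requiring any care is bookkeeping: verifying that the case split supplied by Lemma \ref{lemma2} is exhaustive, and that the disjunctive definition of \emph{incomplete} (join-incomplete, meet-incomplete, or both) covers both branches — which it does by construction, one branch feeding each disjunct. I do not expect any real obstacle here, as the conceptual weight lies entirely in Lemmas \ref{lemma1} and \ref{lemma2}; the Proposition merely packages them into a single clean existence statement applicable to every infinite complete distributive lattice, which is precisely the form needed to address Hartmanis's question in the next section.
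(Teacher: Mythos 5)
Your proposal is correct and is exactly the paper's argument — the paper's entire proof reads ``Combine Lemmas \ref{lemma1} and \ref{lemma2},'' and your write-up simply spells out the case split (non-principal prime ideal versus non-principal prime filter) and the appeal to Lemma \ref{lemma1} or its dual in each branch.
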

\begin{proof}
Combine Lemmas \ref{lemma1} and \ref{lemma2}.
\end{proof}

\section{Construction of an example}
Turing Award winner Juris Hartmanis' problem is on p. 106 of his paper \cite{Ha}:

\begin{quote}
So far we have characterized the complete homomorphisms of the lattices
of subspaces of geometries. It remains an unsolved problem whether there
are any incomplete homomorphisms in these lattices and if so how can
these geometries be characterized.
\end{quote}

In this section we tackle the first part of Hartmanis' problem.

It asks whether 
there is a set $S$ and geometry $\G$ on $S$ and an {\bf incomplete}
lattice homomorphism $$f:\Sub(S, \G)\to \Sub(S, \G).$$

Let $S = \omega$ and set $\G = \big\{\{m,n\}: m, n\in \omega
\land m\neq n\big\}$.

It is easy to see that $\Sub(\omega, \G) = {\mathcal P}(\omega)$.

Since ${\mathcal P}(\omega)$ is distributive, Proposition \ref{prop}
shows that it allows an incomplete lattice endomorphism.

In fact, we can give a more constructive way of providing an incomplete
lattice homomorphism $f:{\mathcal P}(\omega) \to {\mathcal P}(\omega)$. Let $F\subseteq 
{\mathcal P}(\omega)$ denote the set of finite subsets of $\omega$ and
let $M$ denote any maximal ideal containing $F$. Let $f$  send every
member of $M$ to $\emptyset \in {\mathcal P}(\omega)$ and every 
member of ${\mathcal P}(\omega)\setminus M$ to $\omega\in{\mathcal P}
(\omega)$. Then $f$ is an incomplete lattice homomorphism.

What remains open is to have a characterization of the geometries such 
that the complete lattice of subspaces allows incomplete
endomorphisms.



{\footnotesize

} 
\end{document}